\newcommand{\Q}{\mathbb{Q}}
\newcommand{\Z}{\mathbb{Z}}
\newcommand{\dd}{(d)}
\newcommand{\sm}{\,| \,}
\theoremstyle{plain}
\newtheorem{thm}{Theorem}
\newtheorem{lem}[thm]{Lemma}
\theoremstyle{remark} \newtheorem{remark}[thm]{Remark}
\begin{document}
\title{An orthogonal relation on inverse cyclotomic polynomials} 

\author{Jianfeng Xie}
\address{University of Science and Technology of China, Hefei, Anhui 230026, China}

\email{xjfnt@mail.ustc.edu.cn}

\subjclass[2010]{11B83, 11C08}

\keywords{Inverse cyclotomic polynomials}
\maketitle

 \begin{abstract} Let $\Phi_n(X)$ and $\Psi_n(X)=\frac{X^{n}-1}{\Phi_{n}(X)}$ be the $n$-th cyclotomic and inverse cyclotomic polynomials  respectively. In this short note,  for any pair of divisors $ d_{1} \neq  d_{2} $ of $ n $, and  integers $l_1$ and $l_2$ such that  $ 0 \leq l_{1} \leq \varphi(d_{1})-1 $ and $ 0 \leq l_{2} \leq \varphi(d_{2})-1  $, we show that
	\[\left \langle X^{l_{1}} \Psi_{d_{1}}(X) (1+X^{d_1}+\dots X^{n-d_1}), X^{l_{2}} \Psi_{d_{2}}(X) (1+X^{d_2}+\dots X^{n-d_2}) \right \rangle  =0, \]
where $ \langle \cdot, \cdot \rangle $ is the inner product on $\Q[X]$ defined by $ \langle \sum_{k} a_{k}X^{k},\sum_{k} b_{k}X^{k}  \rangle =\sum_{k} a_{k}b_{k}$. 
\end{abstract}

For a positive integer $ n $, the $ n $-th cyclotomic polynomial $ \Phi_{n}(X) $ is known to be an irreducible polynomial in $\Z[X]$ of degree $\varphi(n)$. The $ n $-th inverse cyclotomic polynomial, introduced in \cite{Mo}, is 
\[\Psi_{n}(X):=\frac{X^{n}-1}{\Phi_{n}(X)}.   \]
The (inverse) cyclotomic polynomials are interesting research objects in elementary and algebraic number theory, see for example, the bibliography of \cite{Apo, HPM} for more details. Recently, research in this area is concentrated on  coefficients of (inverse) cyclotomic polynomials, especially when the number of prime divisors of $n$  is small (see \cite{Bei, GM, JLM}).

However, up to now, there seems still no results revealing the relations between  $ \Psi_{m}(X) $ and $ \Psi_{n}(X) $. In this note, we shall present an  orthogonal relation between $ \Psi_{m}(X) $ and $ \Psi_{n}(X) $. 

In the polynomial ring $\Q[X] $, we set
\[\langle\sum_{k=0}^{\infty}a_{k}X^{k}, \sum_{k=0}^{\infty}b_{k}X^{k}  \rangle:=\sum_{k=0}^{\infty}a_{k}b_{k}.   \]
For $ d  \mid n$, set
\[\Psi_{n,d}(X):=\frac{X^{n}-1}{\Phi_{d}(X)}=(1+X^{d}+\dots X^{n-d}) \Psi_d(X) .   \]

Our main result is:
\begin{thm}\label{coeorth}
For any pair of divisors $ d_{1}\neq d_{2} $ of $ n $, and any pair of integers $l_1$ and $l_2$ that $ 0 \leq l_{1} \leq \varphi(d_{1})-1 $ and $ 0 \leq l_{2} \leq \varphi(d_{2})-1 $, one has
	\[ \langle X^{l_{1}} \Psi_{n,d_{1}}(X), X^{l_{2}} \Psi_{n,d_{2}}(X)\rangle  =0.\]
\end{thm}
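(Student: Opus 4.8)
The plan is to recognize the inner product as the extraction of a single coefficient of a Laurent polynomial, and then to exploit the self-reciprocal symmetry of cyclotomic polynomials to collapse the problem to the vanishing of coefficients of $\Psi_{n,d_1}(X)\Psi_{n,d_2}(X)$ in a controlled range, which I expect to follow from a clean support (degree-gap) argument. First I would record the elementary identity that, for $f(X)=\sum_k a_kX^k$ and $g(X)=\sum_k b_kX^k$, the pairing $\langle f,g\rangle=\sum_k a_kb_k$ is exactly the constant term of $f(X)\,g(X^{-1})$. Taking $f=X^{l_1}\Psi_{n,d_1}(X)$ and $g=X^{l_2}\Psi_{n,d_2}(X)$, the quantity in Theorem~\ref{coeorth} becomes the coefficient of $X^{\,l_2-l_1}$ in $\Psi_{n,d_1}(X)\,\Psi_{n,d_2}(X^{-1})$.

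Next I would invoke the well-known fact that $\Phi_d$ is palindromic for $d\ge 2$, i.e.\ $X^{\varphi(d)}\Phi_d(X^{-1})=\Phi_d(X)$, with the opposite sign when $d=1$ since $\Phi_1=X-1$. Combined with $\Psi_{n,d}(X)=(X^n-1)/\Phi_d(X)$ this yields a reflection formula
\[
\Psi_{n,d}(X^{-1})=\eta_d\,X^{\varphi(d)-n}\,\Psi_{n,d}(X),\qquad \eta_d=\pm 1 .
\]
Substituting this with $d=d_2$ rewrites the coefficient above as $\eta_{d_2}$ times the coefficient of $X^{N}$ in $P(X):=\Psi_{n,d_1}(X)\Psi_{n,d_2}(X)$, where $N=n+l_2-l_1-\varphi(d_2)$.

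The decisive step is then to show $[X^N]P=0$ throughout the admissible range of $N$. Since $d_1\ne d_2$ both divide $n$, the polynomials $\Phi_{d_1},\Phi_{d_2}$ are distinct irreducible factors of $X^n-1$, so $Q(X):=(X^n-1)/(\Phi_{d_1}(X)\Phi_{d_2}(X))$ is an honest polynomial of degree $D:=n-\varphi(d_1)-\varphi(d_2)$, and $P(X)=(X^n-1)Q(X)=X^nQ(X)-Q(X)$. The support of $-Q$ lies in $\{0,\dots,D\}$ and that of $X^nQ$ in $\{n,\dots,n+D\}$; because $\varphi(d_1)+\varphi(d_2)\ge 2$ these are disjoint, so $P$ has no nonzero coefficient in the gap $\{D+1,\dots,n-1\}$. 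Finally I would check that the constraints $0\le l_1\le\varphi(d_1)-1$ and $0\le l_2\le\varphi(d_2)-1$ force $N$ exactly into this gap: the extremes $l_2-l_1=1-\varphi(d_1)$ and $l_2-l_1=\varphi(d_2)-1$ give $N=D+1$ and $N=n-1$ respectively. Hence $[X^N]P=0$ and the pairing vanishes.

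The one point requiring care is tracking the sign $\eta_d$ through the exceptional case $d=1$; but since the conclusion only asserts vanishing, the value of $\eta_{d_2}$ is irrelevant, and the genuine content lies entirely in the disjoint-support observation, which I anticipate being the crisp heart of the argument rather than a real obstacle.
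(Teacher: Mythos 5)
Your proof is correct, and it takes a genuinely different route from the paper's. Each step checks out: the pairing is indeed the constant term of $f(X)g(X^{-1})$, hence the coefficient of $X^{l_2-l_1}$ in $\Psi_{n,d_1}(X)\Psi_{n,d_2}(X^{-1})$; the reflection formula $\Psi_{n,d}(X^{-1})=\eta_d X^{\varphi(d)-n}\Psi_{n,d}(X)$ follows from $X^{\varphi(d)}\Phi_d(X^{-1})=\pm\Phi_d(X)$ together with $X^{-n}-1=-X^{-n}(X^n-1)$; since $d_1\neq d_2$ both divide $n$, $Q(X)=(X^n-1)/(\Phi_{d_1}(X)\Phi_{d_2}(X))$ is a polynomial of degree $D=n-\varphi(d_1)-\varphi(d_2)$, so $P=(X^n-1)Q$ has vanishing coefficients exactly in the gap $\{D+1,\dots,n-1\}$ (nonempty since $\varphi(d_1)+\varphi(d_2)\ge 2$, as you note); and the hypotheses on $l_1,l_2$ pin $N=n+l_2-l_1-\varphi(d_2)$ into $[D+1,n-1]$, with the extremes attained exactly as you computed. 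The paper argues quite differently: it identifies $\Q[X]/(X^n-1)$ with the group algebra $\Q[G]$ of the cyclic group of order $n$, uses Maschke and Wedderburn to decompose $\Q[G]=\bigoplus_{d\mid n}\Q[X]/(\Phi_d(X))$ into non-isomorphic simple components, extends the coefficient pairing to an alternating $G$-invariant pairing on $\Q[G]\otimes_\Q V$ for a two-dimensional space $V$, and proves (Lemma 3) that distinct isotypic components are mutually orthogonal; the theorem then follows because $X^{l}\Psi_{n,d}(X)$ lies in the $d$-th component. Your elementary route is self-contained and actually yields a bit more than the stated theorem: the pairing depends only on the difference $l_2-l_1$ and vanishes precisely when $1-\varphi(d_1)\le l_2-l_1\le\varphi(d_2)-1$, and outside this range it is identified, up to sign, as a coefficient of $Q$ itself. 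What the paper's representation-theoretic argument buys is the structural explanation -- orthogonality of non-isomorphic isotypic components under an invariant pairing -- which is more amenable to generalization. The only point to polish in your write-up is to state the reflection sign precisely ($\eta_d=-1$ for $d\ge 2$ and $\eta_d=+1$ for $d=1$), though, as you observe, it is irrelevant to the vanishing.
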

\begin{remark}	
Note that  the coefficients of $ \Psi_{n,d}(X) $ are exactly those of $ \Psi_{d}(X) $, with each repeating $ \frac{n}{d} $ times. Thus Theorem \ref{coeorth} can be regarded as certain orthogonal relation between coefficients of $ \Psi_{d_{1}}(X)$ and $ \Psi_{d_{2}}(X) $.
\end{remark}

We give a proof of this result based on  representation theory.

Let $ G $ be the cyclic group of order $ n $ and $g_0$  a fixed generator  of $ G $. We equip $\Q[X]/(X^n-1)$  with a $G$-action by $g_0\cdot f(X)=Xf(X)$. 
Then the map $g_0\mapsto X$ gives an isomorphism
\[\Q[G]\cong \Q[X]/(X^n-1) \]
of $ \Q[G] $-algebras. We identify these two via this isomorphism.  By the factorization $X^n-1=\prod_{d\mid n}\Phi_d(X)$, then
\[\Q[G]= \bigoplus\limits_{d \mid n}\Q[X]/(\Phi_{d}(X)) =\bigoplus_{d \sm n}\Q[G]^{\dd}, \] 
where  $ \Q[G]^{\dd}:=\Q[X]/\Phi_{d}(X)  $. By Maschke Theorem (see \cite[\S 3.6]{Pi}), $ \Q[G] $ is semisimple, and $ \{\Q[G]^{\dd}: d\mid n\}$ is the set of all (non-isomorphic) simple  $\Q[G]$-modules.
For a $ \Q[G] $-module $M$,  define
\[M^{\dd}:=M\otimes_{ \Q [G]} \Q [G]^{\dd}.\]
By Wedderburn's Structure Theorem (see \cite[\S 3.5]{Pi}), $ M$ has a decomposition
\[ M=\bigoplus_{ d \sm n} M^{\dd}.\]

Let $ V $ be a $ 2 $-dimension $ \Q $-vector space spanned by $v_{1}$ and $ v_{2} $. Define a pairing on $ V \times V $ by:
\[ \begin{split}
\langle \cdot \ , \ \cdot \rangle : V  \quad  \times  \quad V \qquad &\longrightarrow \Q, \\
\qquad \qquad \qquad ( av_{1}+bv_{2}\ , \, cv_{1}+dv_{2})  &\longmapsto ad-bc
\end{split}   \]
Obviously $ \langle \cdot, \cdot \rangle $ is nondegenerate, and $ \langle v_{1},v_{2}\rangle=1 $. Consider the $ \Q[G] $-module $ \Q[G]\otimes_{\Q}V$, where $ G $ acts on the left. This pairing $ \langle \cdot,\cdot\rangle$ can be naturally extended to 
\begin{equation}\label{pair1}
\begin{split}
\Q[G]\otimes_{\Q} V \quad \times  \quad \Q[G]\otimes_{\Q}V& \longrightarrow \Q, \\
\langle\sum_{g}g\otimes v_{g}\quad , \quad  \sum_{g}g\otimes w_{g}\rangle  &\longmapsto \sum_{g}\langle v_{g},w_{g}\rangle
\end{split} 	
\end{equation}
One can check that this pairing is alternative and nondegenerate, which we still denote as $ \langle\cdot,\cdot\rangle$.

\begin{lem}\label{orth}
For any divisor $ d_{1} $ of $ n $, one has 
\[ 	\big(\Q[G]^{(d_{1})}\otimes_{\Q} V  \big)^{\perp}=\bigoplus_{d_{1} \neq d \sm n}\big(\Q[G]^{\dd}\otimes_{\Q} V\big).  \]
In particular, if $ d_{1} $ and $ d_{2} $ are distinct divisors  of $ n $, then
\[ \langle \Q[G]^{(d_{1})}\otimes_{\Q} V , \Q[G]^{(d_{2})}\otimes_{\Q} V    \rangle =0.  \]
\end{lem}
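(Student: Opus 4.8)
The plan is to exploit the $G$-equivariance of the pairing \eqref{pair1} together with Schur's lemma. First I would check that $\langle\cdot,\cdot\rangle$ is $G$-invariant, i.e. $\langle g\cdot x,\,g\cdot y\rangle=\langle x,y\rangle$ for all $g\in G$. Since $G$ acts only on the $\Q[G]$-factor by left translation and fixes $V$, writing $x=\sum_g g\otimes v_g$ and $y=\sum_g g\otimes w_g$ and reindexing the sum $\sum_g\langle v_g,w_g\rangle$ after translating the summation variable by $g$ gives invariance at once. Thus $\langle\cdot,\cdot\rangle$ is a nondegenerate $G$-invariant bilinear form on $M:=\Q[G]\otimes_\Q V$.

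Next I abbreviate $M_d:=\Q[G]^{\dd}\otimes_\Q V$. Because $G$ acts trivially on $V$, as a $\Q[G]$-module $M_d\cong(\Q[G]^{\dd})^{\oplus 2}$ is $\Q[G]^{\dd}$-isotypic. The restriction of $\langle\cdot,\cdot\rangle$ to $M_{d_1}\times M_{d_2}$ is a $G$-invariant pairing, which is the same as the datum of a $\Q[G]$-module homomorphism $M_{d_1}\to (M_{d_2})^{*}$, where $G$ acts on the dual in the usual contragredient way. The key observation is that the simple module $\Q[G]^{\dd}=\Q[X]/\Phi_d(X)$ is self-dual: since $k\mapsto -k$ permutes the residues coprime to $d$, the $g_0$-eigenvalues $\zeta_d^{-k}$ of $(\Q[G]^{\dd})^{*}$ form the same multiset as the eigenvalues $\zeta_d^{k}$ of $\Q[G]^{\dd}$, so the two have equal character and hence are isomorphic. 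Consequently $(M_{d_2})^{*}\cong(\Q[G]^{(d_2)})^{\oplus 2}$ is $\Q[G]^{(d_2)}$-isotypic. For $d_1\neq d_2$ the simples $\Q[G]^{(d_1)}$ and $\Q[G]^{(d_2)}$ are non-isomorphic, so Schur's lemma forces $\mathrm{Hom}_{\Q[G]}(M_{d_1},(M_{d_2})^{*})=0$, i.e. the pairing vanishes on $M_{d_1}\times M_{d_2}$. This already yields the \emph{in particular} assertion and shows $\bigoplus_{d\neq d_1}M_d\subseteq (M_{d_1})^{\perp}$.

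To upgrade this inclusion to equality I would count dimensions using nondegeneracy. On the $2n$-dimensional space $M$ a nondegenerate form satisfies $\dim W^{\perp}=\dim M-\dim W$ for every subspace $W$. Since $\dim M_{d_1}=2\varphi(d_1)$ and $\sum_{d\mid n}\varphi(d)=n$, this gives $\dim(M_{d_1})^{\perp}=2n-2\varphi(d_1)=\sum_{d\neq d_1}2\varphi(d)=\dim\bigoplus_{d\neq d_1}M_d$, so the inclusion above is an equality, proving the first displayed formula.

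The step I expect to be the genuine content, rather than routine bookkeeping, is the self-duality of $\Q[G]^{\dd}$: it is exactly what lets Schur's lemma apply after passing to the dual, and everything else ($G$-invariance and the dimension count) is a direct verification. An alternative packaging that avoids the separate dimension count is to note that a nondegenerate invariant form produces a $G$-isomorphism $M\xrightarrow{\sim}M^{*}$ which must respect the isotypic decompositions $M=\bigoplus_d M_d$ and $M^{*}=\bigoplus_d (M_d)^{*}$; it therefore carries $M_{d_1}$ onto $(M_{d_1})^{*}$, and since $(M_{d_1})^{*}$ consists of the functionals vanishing on $\bigoplus_{d\neq d_1}M_d$, one reads off $(M_{d_1})^{\perp}=\bigoplus_{d\neq d_1}M_d$ directly.
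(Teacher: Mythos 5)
Your proof is correct, and it reaches the lemma by a genuinely different route from the paper's. The two arguments share the outer scaffolding: the $G$-invariance of the pairing \eqref{pair1} (which the paper uses to show that orthogonal complements of submodules are submodules) and the dimension count $\sum_{d\mid n}2\varphi(d)=2n$ that upgrades an inclusion to the stated equality. They differ at the core step of separating the isotypic blocks $M_d=\Q[G]^{\dd}\otimes_{\Q}V$. You dualize: an invariant pairing $M_{d_1}\times M_{d_2}\to\Q$ is a $\Q[G]$-map $M_{d_1}\to (M_{d_2})^{*}$, the simple module $\Q[G]^{\dd}$ is self-dual (your eigenvalue argument is fine, since $k\mapsto-k$ preserves primitivity modulo $d$), and Schur's lemma kills the map when $d_1\neq d_2$. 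The paper instead works inside a single block: its Claim shows no simple submodule of $M_{d_1}$ can lie in $M_{d_1}^{\perp}$, via the explicit positivity computation $\langle T(X)\otimes(v_1+bv_2),\,T(X)\otimes v_2\rangle=\sum_k a_k^2>0$, which exploits the symplectic form on $V$; semisimplicity then forces $M_{d_1}^{\perp}$ into $\bigoplus_{d\neq d_1}M_d$. So the paper trades duals and Schur for a bare-hands inner-product computation, while you trade the computation for the structural input of self-duality --- which you rightly flag as the real content: for a group algebra with a non-self-dual simple $S$, the blocks for $S$ and $S^{*}$ can pair nontrivially, so Schur alone would not suffice without that step. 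One further point in your favor: the simple submodules of $M_{d_1}\cong K^{2}$, $K=\Q(\zeta_{d_1})$, are $K$-lines, not only the rational lines $\Q[G]^{(d_1)}\otimes_{\Q}\Q v$ that the paper's Claim literally tests (the paper's computation does extend verbatim to an arbitrary $K$-line, since $\langle\, \cdot\otimes v_2,\,\cdot\otimes v_2\rangle=0$, but this is left implicit there); your Schur argument never needs to enumerate simple submodules at all. Your closing repackaging via the induced isomorphism $M\xrightarrow{\ \sim\ }M^{*}$ is also valid and neatly absorbs the separate dimension count.
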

\begin{proof}
	Suppose $ M $ is a $ \Q[G] $-submodule of $\Q[G]\bigotimes_{\Q} V$, then for any $ a \in M^{\perp} $, $ \langle M,a \rangle=0 $. Thus we have
	\[\langle M,ga\rangle=\langle gM,ga \rangle =\langle M,a \rangle=0,\] 
	which implies that the orthogonal complement of $ \Q[G] $-submodule of $\Q[G]\bigotimes_{\Q} V$ of $ \Q[G]\bigotimes_{\Q} V $ is still a $ \Q[G] $-submodule of $\Q[G]\bigotimes_{\Q} V$.

\vskip 0.3cm	
\noindent	\textbf{Claim}: For any $ 0 \neq v \in V $, $ \Q[G]^{\dd}\otimes_{\Q}\Q v  \nsubseteq  \big( \Q[G]^{\dd}\otimes_{\Q}V\big)^{\perp} $.

\vskip 0.3cm
	If the claim is not true, then there exists $0 \neq v_{0}=av_{1}+bv_{2}\in V  $ such that
	\begin{equation}\label{orth4}
	\langle\Q[G]^{\dd}\otimes_{\Q} V ,\Q[G]^{\dd}\otimes_{\Q} \Q v_{0} \}\rangle=0. 
	\end{equation}
	Without loss of generality, we may assume $ a=1 $. 
	Take $0 \neq T(X) =\sum_{k=0}^{n-1}a_{k}X^{k} \in \Q[X]/\Phi_{d}(X) $. 
	On one hand, by \eqref{pair1}, we have
	\[ 
	\langle T(X)\otimes (v_{1}+bv_{2}),T(X)\otimes v_{2}\rangle  =\sum_{k \geq 0}^{n-1}a_{k}a_{k}>0.
	\]
On the other hand, by \eqref{orth4}, we have
	\[
	\langle T(X)\otimes (v_{1}+bv_{2}),T(X)\otimes v_{2}\rangle=0.
\]
This is a contradiction. Thus follows the claim.
	
	As $ \big(\Q[G]^{(d_{1})}\otimes_{\Q} V  \big)^{\perp} $ is a direct sum of certain simple $ \Q[G] $-modules, the  claim implies that none of simple submodules of $\Q[G]^{(d_{1})}\otimes_{\Q} V $ can appear in the decomposition of $  \big(\Q[G]^{(d_{1})}\otimes_{\Q} V  \big)^{\perp} $, thus
	\begin{equation}\label{subset}
	\big(\Q[G]^{(d_{1})}\otimes_{\Q} V  \big)^{\perp} \subseteq  \bigoplus_{d_{1} \neq d \sm n}\big(\Q[G]^{\dd}\otimes_{\Q} V\big).
	\end{equation}
By counting the dimensions of both sides as $\Q$-vector spaces,  we have
	\[ 	\big(\Q[G]^{(d_{1})}\otimes_{\Q} V  \big)^{\perp} = \bigoplus_{d_{1} \neq d \sm n}\big(\Q[G]^{\dd}\otimes_{\Q} V\big). \qedhere \]
\end{proof}

\begin{proof}[Proof of Theorm~1]
Note that for any divisor $ d $ of $ n $, the  elements 
 \[ \Psi_{n,d}(X), X\Psi_{n,d}(X), \dots,X^{\varphi(d)-1}\Psi_{n,d}(X) \]
form a $ \Q $-basis of $ \Q[X]/\Phi_{d}(X) $. Then we have
\[\begin{split}
X^{l_{1}} \Psi_{n,d_{1}}(X)\otimes v_{1}\in \Q[G]^{(d_{1})}\otimes_{\Q} V, \\
X^{l_{2}} \Psi_{n,d_{2}}(X)\otimes v_{2}\in \Q[G]^{(d_{2})}\otimes_{\Q} V.
\end{split}   \]
 Thus by Lemma~3, we get
\[   \langle X^{l_{1}} \Psi_{n,d_{1}}(X), X^{l_{2}} \Psi_{n,d_{2}}(X)\rangle = \langle X^{l_{1}} \Psi_{n,d_{1}}(X)\otimes v_{1}, X^{l_{2}} \Psi_{n,d_{2}}(X)\otimes v_{2}\rangle=0.  \qedhere  \]
\end{proof}


\begin{thebibliography}{50}
	\bibitem{Apo}T. M. Apostol, {\em The Resultant of the Cyclotomic Polynomials $ F_{m}(ax) $ and $ F_{n}(bx) $}, Math. Comp. \textbf{29} (1975), 1-6.
	
	\bibitem{Bei}
	M. Beiter, {\em	Magnitude of the coefficients of the cyclotomic polynomial $F_{pqr}(x)$}.
	Amer. Math. Monthly \textbf{75} (1968), 370–372.
	

	\bibitem{GM}
	Y. Gallot and P. Moree, {\em Ternary cyclotomic polynomials having a large coefficient}. J. Reine Angew. Math. \textbf{632} (2009), 105–125.
	
	\bibitem{HPM}
	A. Herrera-Poyatos and P. Moree, {\em Coefficients and higher order derivatives of cyclotomic polynomials: old and new}. Expo. Math. \textbf{39} (2021), no. 3, 309–343.
	
	\bibitem{JLM}
	C. G. Ji, W. P. Li and P. Moree, {\em Values of coefficients of cyclotomic polynomials} II, Discrete Math. \textbf{309} (2009), 1720–1723.
	
	\bibitem{Mo} P. Moree, {\em Inverse cyclotomic polynomials}, J. Number Theory. \textbf{127} (2009), 667-680.
	
	\bibitem{Pi} R. S. Pierce, {\em Associative Algebras}, Graduate Texts In Mathematics \textbf{88}, Springer-Verlag, New York, 1982.
\end{thebibliography}
\end{document}